\theoremstyle{plain}
\newtheorem{theorem}[subsection]{Theorem}
\newtheorem{proposition}[subsection]{Proposition}
\theoremstyle{definition}
\newtheorem{example}[subsection]{Example}
\newtheorem{definition}[subsection]{Definition}
\newtheorem{remark}[subsection]{Remark}
\newenvironment{tfae}
{
\begin{enumerate}}
{\end{enumerate}}
\newcommand{\defn}{\textbf}
\newcommand{\noproof}{\hfill \qed}
\renewcommand{\implies}{$\Rightarrow$}
\newcommand{\links}{\langle}
\newcommand{\rechts}{\rangle}
\newcommand{\product}{\raisebox{0.2mm}{\ensuremath{\scriptstyle{\circ}}}}
\newcommand{\comp}{}
\newcommand{\N}{\ensuremath{\mathrm{N}}}
\newcommand{\Cl}{\ensuremath{\mathrm{Cl}}}
\newcommand{\I}{\ensuremath{\mathrm{I}}}
\newcommand{\Ker}{\ensuremath{\mathrm{Ker}}}
\newcommand{\V}{\ensuremath{\mathcal{V}}}
\newcommand{\C}{\ensuremath{\mathcal{C}}}
\newcommand{\Pt}{\ensuremath{\mathsf{Pt}}}
\newcommand{\SH}{{\rm (SH)}}
\newcommand{\SSH}{{\rm (SSH)}}
\def\pullback{
 \ar@{-}[]+R+<6pt,-1pt>;[]+RD+<6pt,-6pt>%
 \ar@{-}[]+D+<1pt,-6pt>;[]+RD+<6pt,-6pt>}
\def\pullbackdots{%
 \ar@{.}[]+R+<6pt,-1pt>;[]+RD+<6pt,-6pt>%
 \ar@{.}[]+D+<1pt,-6pt>;[]+RD+<6pt,-6pt>}
\begin{document}

\thanks{This work was partially supported by the Centre for Mathematics of the University of Coimbra -- UID/MAT/00324/2013, by ESTG and
CDRSP from the Polytechnical Institute of Leiria --
UID/Multi/04044/2013, and by the grant number SFRH/BPD/69661/2010,
funded by the Portuguese Government through FCT/MEC and co-funded
by the European Regional Development Fund through the Partnership
Agreement PT2020}
\thanks{The second author is a Postdoctoral Researcher of the Fonds de la Recherche Scientifique--FNRS}
\thanks{The fourth author is a Research Associate of the Fonds de la
Recherche Scientifique--FNRS}

\author{N.~Martins-Ferreira}
\address[N.~Martins-Ferreira]{Departamento de
Matem\'atica, Escola Superior de Tecnologia e Gest\~ao, Centro
para o Desenvolvimento R\'apido e Sustentado do Produto, Instituto
Polit\'ecnico de Leiria, 2411--901 Leiria, Portugal}
\thanks{}
\email{martins.ferreira@ipleiria.pt}

\author{A.~Montoli}
\address[A.~Montoli]{CMUC, Universidade de Coimbra,
3001--501 Coimbra, Portugal\newline and\newline Institut de
Recherche en Math\'ematique et Physique, Universit\'e catholique
de Lou\-vain, chemin du cyclotron~2 bte~L7.01.02, 1348
Louvain-la-Neuve, Belgium}
\thanks{}
\email{montoli@mat.uc.pt}

\author{A.~Ursini}
\address[A.~Ursini]{DIISM, Department of Information Engineering and Mathematical Sciences, Universit\`a degli studi di Siena, 53100 Siena, Italy}
\email{aldo.ursini@unisi.it}

\author{T.~Van~der Linden}
\address[T.~Van~der Linden]{Institut de Recherche
en Math\'ematique et Physique, Universit\'e catholique de Louvain,
chemin du cyclotron~2 bte~L7.01.02, 1348 Louvain-la-Neuve,
Belgium}
\thanks{}
\email{tim.vanderlinden@uclouvain.be}

\keywords{Ideal; clot; zero-class; surjective left split relation;
pointed exact Mal'tsev category; \emph{Smith is Huq} condition.}

\subjclass[2010]{08A30, 18A32, 18E99}

\title[What is an ideal a zero-class of?]{What is an ideal a zero-class of?}

\begin{abstract}
We characterise, in pointed regular categories, the ideals as the
zero-classes of surjective relations. Moreover, we study a
variation of the \emph{Smith is Huq} condition: two surjective
left split relations commute as soon as their zero-classes
commute.
\end{abstract}

\maketitle

\section{Introduction}

The description of congruences, and of some other relations, in
terms of their zero-classes is a very classical topic in universal
algebra. It led to the study of different notions of subalgebras
in pointed varieties; let us mention here the ones of
ideal~\cite{Higgins, Magari, Ursini1} and
clot~\cite{Agliano-Ursini}.

Later these notions have been considered in a categorical context
\cite{Janelidze-Marki-Ursini, Janelidze-Marki-Ursini2, MM-NC}.
Clots were characterised as zero-classes of internal reflexive
relations, and ideals were characterised as regular images of
clots. However, a characterisation of ideals as zero-classes of
suitable relations was still missing, both in universal and
categorical algebra.

The aim of the present paper is to fill this gap. We prove that,
in every pointed regular category, the ideals are the zero-classes
of what we call \emph{surjective relations}. Such is any relation
from an object $X$ to an object $Y$ where the projection on $Y$ is
a regular epimorphism. In fact, we can always choose a \emph{left
split} surjective relation to represent a given ideal, which means
that moreover the projection on $X$ is a split epimorphism. We
also show that, in general, it is not possible to describe ideals
by means of endorelations on an object $X$. The table at the end
of the introduction gives an overview of the description of all
the notions mentioned above in terms of zero-classes.

A related issue is to consider a variation of the so-called
\emph{Smith is Huq} condition, which says that two equivalence
relations on the same object commute in the Smith--Pedicchio
sense~\cite{Smith, Pedicchio} if and only if their zero-classes
commute in the Huq sense~\cite{Huq}. Our condition is then the
following: two semi-split surjective relations commute if and only
if their zero-classes (their associated ideals) commute. This
provides a conceptual interpretation of the \emph{admissibility}
condition introduced in~\cite{MF-PhD} and further explored
in~\cite{HVdL, MFVdL4}. We consider some equivalent and some
stronger conditions, and we compare them with the standard
\emph{Smith is Huq} condition.

The paper is organised as follows. In Section 2 we recall the
notions of ideal and clot, both from the universal and the
categorical-algebraic points of view, and we prove some stability
properties of ideals. In Section 3 we prove that ideals are
exactly zero-classes of surjective relations (or, equivalently, of
semi-split surjective relations) and we consider some concrete
examples. In Section~4 we study the above-mentioned variations of
the \emph{Smith is Huq} condition.

\begin{table}[h!]
\hrule\smallskip \resizebox{\textwidth}{!} {\begin{tabular}{ccccc}
\txt{any\\
(left split)\\
relation} & \txt{surjective\\ (left split)\\ relation} & \txt{reflexive relation} & \txt{equivalence relation} & \txt{effective\\ equivalence relation}\\\\
monomorphism & ideal & clot & normal monomorphism & kernel
\end{tabular}}
\smallskip\hrule\medskip
\caption{Several types of monomorphisms in pointed regular
categories}\label{overview}
\end{table}

\section{Ideals and clots}

The notion of \emph{ideal} was introduced in~\cite{Higgins} in the
context of groups with multiple operators (also called
$\Omega$-groups), and then extended in~\cite{Magari}---and further
studied in~\cite{Ursini1} and in subsequent papers---to varieties
of algebras with a constant $0$. We recall here the definition in
the case of \defn{pointed} varieties: those with a unique constant
$0$.

\begin{definition} \label{ideal universal algebra}
A term $t(x_1, \ldots, x_m, y_1, \ldots , y_n)$ in a pointed
variety $\V$ is said to be an \defn{ideal term} in $y_1$, \dots,
$y_n$ if $t(x_1, \ldots , x_m, 0, \ldots , 0) = 0$ is an identity
in $\V$. A subalgebra $I$ of an algebra $A$ in $\V$ is an
\defn{ideal} of $A$ if $t(x_1, \ldots, x_m, i_1, \ldots , i_n)$
belongs to $I$ for all $x_1$, \dots, $x_m \in A$, all $i_1$,
 \dots, $i_n \in I$ and every ideal term $t$.
\end{definition}

Later, as an alternative, in the paper~\cite{Agliano-Ursini} the
concept of \emph{clot} was introduced:

\begin{definition} \label{clot universal algebra}
A subalgebra $K$ of $A$ in $\V$ is called a \defn{clot} in $A$ if
\[ t(a_1, \ldots , a_m, 0, \ldots , 0) = 0
\]
and $k_1, \ldots , k_n \in K$ imply $t(a_1, \ldots , a_m, k_1,
\ldots , k_n) \in K$ for all $a_1$, \dots, $a_m$, $k_1$,~\dots,
$k_n$ in $A$ and every $(m+n)$-ary term function $t$ of $A$.
\end{definition}

It was shown in~\cite{Agliano-Ursini} that clots are exactly
$0$-classes of semi-congruences, that is, of those reflexive
relations which are compatible with all the operations in the
variety. Thus, for any algebra $A$ in any variety $\V$ there is an
inclusion
\[ \N(A) \subseteq \Cl(A) \subseteq \I(A), \]
where $\N(A)$ is the set of normal subalgebras of $A$ (that are
the $0$-classes of the congruences on $A$), $\Cl(A)$ is the set of
clots of $A$ and $\I(A)$ is the set of ideals.

All these notions were then studied in a categorical context (see
\cite{Janelidze-Marki-Ursini, Janelidze-Marki-Ursini2, MM-NC}).
Before recalling the categorical counterparts of the definitions
above, we need to introduce some terminology. The context that we
consider is the one of pointed regular categories.

\begin{definition}
Given a span
\begin{equation}\label{normalisation}
\vcenter{\xymatrix@!0@=4em{& R \ar[ld]_-{d} \ar[rd]^-{c} \\
X && Y}}
\end{equation}
a \defn{zero-class} of it is the arrow $i \colon {I \to Y}$ in the
pullback
\begin{equation}\label{zero-class}
\vcenter{\xymatrix@!0@=4em{
I \ar[r]^-{l} \pullback \ar[d]_-{i} & R \ar[d]^{\links d,c\rechts} \\
Y \ar[r]_-{\links0,1_{Y}\rechts} & X\times Y.}}
\end{equation}
\end{definition}

\begin{definition}
A \defn{normalisation} of~\eqref{normalisation} is the composite
$ck\colon{K\to X}$, where $k\colon {K\to R}$ is a kernel of $d$.
\end{definition}

Observe that, for our purposes, $(d,c)$ and $(c,d)$ are different
spans. If the span $(d,c)$ is a relation, which means that $d$ and
$c$ are jointly monomorphic, then its zero-class is a
monomorphism, since pullbacks preserve monomorphisms. Similarly,
the normalisation of a relation is a monomorphism, too. Of course
the zero-class and the normalisation of a span are unique up to
isomorphism, so (with abuse of terminology) we may talk about
``the'' zero-class and ``the'' normalisation. In fact, the two
procedures give the same result:

\begin{proposition}\label{zero-class = normalisation}
For any span $(d,c)$ its zero-class coincides with its
normalisation.
\end{proposition}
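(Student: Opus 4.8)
The plan is to produce a canonical isomorphism between the object $I$ appearing in the zero-class~\eqref{zero-class} and the domain $K$ of a kernel $k\colon K\to R$ of $d$, under which $l$ is identified with $k$ and the zero-class arrow $i$ with the composite $ck$. The whole argument rests on a single observation together with the pasting law for pullbacks, so there is no genuinely hard step; the only point requiring care is to keep track of which product projection is in play.

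First I would record the observation that, writing $\pi_X\colon X\times Y\to X$ for the first product projection, the morphism $\langle 0,1_Y\rangle\colon Y\to X\times Y$ is a kernel of $\pi_X$. Indeed $\pi_X\langle 0,1_Y\rangle=0$, and the square having $\langle 0,1_Y\rangle$ and $\pi_X$ as two of its edges and the zero morphisms to and from the zero object as the other two is a pullback: any morphism into $X\times Y$ that becomes $0$ after composition with $\pi_X$ factors uniquely through $\langle 0,1_Y\rangle$ via its second component, since $\pi_Y\langle 0,1_Y\rangle=1_Y$.

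Next I would stack the defining pullback~\eqref{zero-class} of the zero-class on top of this kernel square, arranging that the shared edge is $\langle 0,1_Y\rangle$. Because $\pi_X\langle d,c\rangle=d$, the outer rectangle of the resulting diagram is exactly the square that exhibits a kernel of $d$: its top edge is $l\colon I\to R$, its right edge is $d$, and its left and bottom edges are the zero morphisms through the zero object. By the pasting law this outer rectangle is again a pullback, so $l\colon I\to R$ is a kernel of $d$. Since kernels are unique up to a unique compatible isomorphism, this yields a canonical isomorphism $I\cong K$ identifying $l$ with $k$.

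Finally I would compute the zero-class arrow through this identification. From~\eqref{zero-class} we have $\langle d,c\rangle l=\langle 0,1_Y\rangle i$, and composing with the second projection $\pi_Y\colon X\times Y\to Y$ gives $i=\pi_Y\langle d,c\rangle l=cl$. Transporting along the isomorphism $I\cong K$ that carries $l$ to $k$, this reads $i=ck$, which is precisely the normalisation. I expect the only mild obstacle to be the bookkeeping in the pasting step, namely making sure that it is $\pi_X$ rather than $\pi_Y$ that is composed with $\langle d,c\rangle$ to recover $d$, so that the outer rectangle genuinely computes the kernel of $d$ and not that of $c$.
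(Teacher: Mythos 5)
Your proof is correct and follows essentially the same route as the paper's: the paper's argument is precisely that $l$ in~\eqref{zero-class} is a kernel of $d$ (which you justify explicitly by pasting the pullback onto the kernel square of $\pi_X$) and that consequently $i=cl$. The paper adds a converse remark that any square with $l=\ker(d)$ and $i=cl$ is a pullback, but, as the paper itself notes, this is subsumed by the uniqueness-up-to-isomorphism argument you invoke.
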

\begin{proof}
It is easily seen that the morphism $l$ in the
diagram~\eqref{zero-class} is a kernel of~$d$. As a consequence,
$i=cl$. On the other hand, any square such as~\eqref{zero-class}
in which $l=\ker(d)$ and $i=cl$ is a pullback.
\end{proof}

The second part of the proof also follows from the observation
that the zero-class and the normalisation of a span are unique up
to isomorphism.

\begin{definition} \label{normal subobject}
A \defn{normal subobject} of an object $A$ is the zero-class of an
equivalence relation on $A$.
\end{definition}

We observe that this notion is a generalisation of the notion of
\textbf{kernel} of a morphism: indeed, kernels are exactly
zero-classes of effective equivalence relations. It is also easy
to see that, in the pointed case, the definition above is
equivalent to the one introduced by Bourn in~\cite{Bourn2000}:
see~\cite{MM-NC} and Example~3.2.4, Proposition~3.2.12
in~\cite{Borceux-Bourn}.

\begin{definition} \label{clot categories}
A \defn{clot} of $A$ is the zero-class of a reflexive relation on
$A$.
\end{definition}

The original categorical definition of clot, given in
\cite{Janelidze-Marki-Ursini}, was different: roughly speaking, a
clot of an object $A$ was defined as a subobject which is
invariant under the \emph{conjugation action} on $A$. However, the
two definitions are equivalent, as already observed in
\cite{Janelidze-Marki-Ursini}.

The following categorical definition of ideal was proposed in
\cite{Janelidze-Marki-Ursini2}. It was observed in
\cite{Janelidze-Marki-Ursini} that, in the varietal case, it
coincides with Definition~\ref{ideal universal algebra} above.

\begin{definition}
A monomorphism $i\colon {I\to Y}$ is an \defn{ideal} if there
exists a commutative square
\begin{equation}\label{ideal}
\vcenter{\xymatrix@!0@=4em{K \ar@{{ >}->}[d]_-{k} \ar@{-{>>}}[r]^{q}& I \ar@{{ >}->}[d]^{i} \\
X \ar@{-{>>}}[r]_-{p} & Y}}
\end{equation}
in which $p$ and $q$ are regular epimorphisms and $k$ is a clot.
In other words, an ideal is the \defn{regular image} of a clot.
\end{definition}

The following fact was already observed in
\cite[Corollary~3.1]{Janelidze-Marki-Ursini2}:

\begin{proposition}
Every ideal is the regular image of a kernel along a regular
epimorphism.
\end{proposition}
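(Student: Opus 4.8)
The plan is to start from the defining data of the ideal and to ``open up'' the clot it comes from, using Proposition~\ref{zero-class = normalisation} to re-express that clot as built from a kernel, and then to merge the two resulting regular epimorphisms.

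First I would unpack the definition. Let $i\colon{I\to Y}$ be an ideal, witnessed by a commutative square~\eqref{ideal} with $p$, $q$ regular epimorphisms, $i$ a monomorphism and $k\colon{K\to X}$ a clot. Since $iq=pk$ with $i$ a monomorphism and $q$ a regular epimorphism, this is the (regular epi)--(mono) factorisation of $pk$, so that $i$ is the regular image of the subobject $k$ along $p$.

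Next I would replace the clot by a kernel. By Definition~\ref{clot categories} the clot $k$ is the zero-class of some reflexive relation $(r_0,r_1)\colon{R\to X\times X}$ on $X$. Applying Proposition~\ref{zero-class = normalisation} to the span $(r_0,r_1)$---with $r_0$ in the role of $d$ and $r_1$ in the role of $c$---identifies the zero-class with the normalisation, so that, as subobjects, $k=r_1\product\kappa$ where $\kappa=\ker(r_0)$. Consequently $pk=p\product r_1\product\kappa$ and hence $i=\mathrm{im}(pk)=\mathrm{im}(p\product r_1\product\kappa)$. Here $\kappa=\ker(r_0)$ is a kernel, that is, the zero-class of an effective equivalence relation; and $p\product r_1$ is a regular epimorphism, being the composite of the regular epimorphism $p$ with $r_1$. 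Thus $i$ is the regular image of the kernel $\kappa$ along the regular epimorphism $p\product r_1$, as required.

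The one point that requires care---the main, though mild, obstacle---is the verification that $r_1$ is a regular epimorphism: this is exactly where reflexivity of $R$ enters, since it provides a section $e\colon{X\to R}$ with $r_1e=1_X$, exhibiting $r_1$ as a split, hence regular, epimorphism. It is precisely this fact that upgrades the identity $k=r_1\product\kappa$ from a bare factorisation ``clot equals an image of a kernel'' into one along a regular epimorphism, so that $p$ and $r_1$ may legitimately be composed into a single regular epimorphism.
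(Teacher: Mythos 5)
Your argument is correct and follows essentially the same route as the paper's own proof: write the clot $k$ as $c\cdot\ker(d)$ via Proposition~\ref{zero-class = normalisation}, note that reflexivity makes $c$ a split (hence regular) epimorphism, and compose with $p$ using closure of regular epimorphisms under composition in a regular category. Your version merely spells out the details (the image factorisation and the role of the reflexivity section) that the paper leaves implicit.
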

\begin{proof}
Proposition~\ref{zero-class = normalisation} tells us that the
morphism $k$ in Diagram~\eqref{ideal} is of the form $cl$ for some
kernel $l$ and some split epimorphism $c$. The claim now follows,
since a composite of two regular epimorphisms in a regular
category is still a regular epimorphism.
\end{proof}

The first aim of this paper is to characterise the ideals as the
zero-classes of suitable relations. Before doing that, we prove
some stability properties of clots and ideals.

\begin{proposition}\label{stability clots}
Clots are stable under pullbacks, and an intersection of two clots
is still a clot.
\end{proposition}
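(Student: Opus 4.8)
The plan is to use the characterisation of a clot as the zero-class of a reflexive relation (Definition \ref{clot categories}) and to reduce both assertions to two facts: that the class of reflexive relations is stable under the relevant pullback constructions, and that taking zero-classes commutes with these constructions via the pullback pasting lemma.

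For pullback stability, let $i\colon I\to A$ be a clot, say the zero-class of a reflexive relation $(d,c)\colon R\to A\times A$ with reflexivity $\rho\colon A\to R$, and let $f\colon B\to A$ be an arbitrary morphism. First I would pull back the relation itself: form $S$ as the pullback of $\langle d,c\rangle$ along $f\times f\colon B\times B\to A\times A$. Since $\langle d,c\rangle$ is a monomorphism, so is the induced map $S\to B\times B$, so $S$ is a relation on $B$; and it is reflexive because $\rho$ composed with $f$ together with the diagonal $\Delta_B$ induce a reflexivity map $B\to S$, using the naturality identity $(f\times f)\Delta_B=\Delta_A f$. It then remains to identify the zero-class of $S$ with the pullback $f^*(i)$. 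For this I would paste pullback squares: the defining square of $S$ placed above the zero-class square of $S$ yields an outer pullback rectangle over the cospan $B\xrightarrow{\langle 0,1_A\rangle f}A\times A\xleftarrow{\langle d,c\rangle}R$, using the equality $(f\times f)\langle 0,1_B\rangle=\langle 0,1_A\rangle f$. The very same outer rectangle is obtained by pasting the zero-class square of $R$ with the pullback of $i$ along $f$. By uniqueness of pullbacks the two constructions agree, so $f^*(I)$ is the zero-class of the reflexive relation $S$, hence a clot.

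For the intersection, let $i_1\colon I_1\to A$ and $i_2\colon I_2\to A$ be clots, zero-classes of reflexive relations $R_1$, $R_2$ on $A$. I would form the intersection relation $R_1\cap R_2$ as the pullback of $\langle d_1,c_1\rangle$ and $\langle d_2,c_2\rangle$ over $A\times A$; it is again a relation, being a pullback of monomorphisms, and reflexive, since the two reflexivity maps $A\to R_1$ and $A\to R_2$ both sit over the diagonal and hence induce a map $A\to R_1\cap R_2$. The claim is then that the zero-class of $R_1\cap R_2$ is precisely $I_1\cap I_2$. I would prove this by viewing all the objects involved as a single wide pullback $R_1\times_{A\times A}R_2\times_{A\times A}A$ over $A\times A$, with $A$ mapping in via $\langle 0,1_A\rangle$, and computing it in two ways: bracketing $R_1$ and $R_2$ first gives the zero-class of $R_1\cap R_2$, whereas pulling each $R_j$ back along $\langle 0,1_A\rangle$ first gives, by the pasting lemma, the pullback $I_1\times_A I_2$, which is exactly the intersection $I_1\cap I_2$ of the two subobjects.

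In both halves the argument is essentially a bookkeeping exercise with the pasting lemma, so the only genuinely delicate point is the preservation of reflexivity: one must check that in each case the reflexivity map of the newly constructed relation exists and lies over the diagonal, which reduces to the naturality of diagonals together with the universal property of the pullbacks defining $S$ and $R_1\cap R_2$. Once that is in place, both statements follow formally.
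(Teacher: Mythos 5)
Your argument is correct and is essentially the paper's own proof: the paper carries out exactly your two pullback-pasting computations in the form of two commutative cubes (one for pulling back a clot along $f$, one for intersecting two clots), concluding in each case that the relevant face is a pullback. Your explicit verification of reflexivity of the pulled-back and intersected relations, via naturality of the diagonal and the universal property of the pullback, fills in a step the paper only asserts.
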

\begin{proof}
Suppose that $k\colon{K\to X}$ is the zero-class of a reflexive
relation $(R,d,c)$ on $X$ and consider a morphism $f\colon{Y\to
X}$. Pull back $k$ along $f$, and $\links d,c\rechts \colon {R\to
X\times X}$ along $f\times f\colon{Y\times Y\to X\times X}$, to
obtain the commutative cube
\[
\xymatrix@!0@=4em{K' \ar@{{ >}->}[dd]_-{k'} \ar@{-{>}}[rr] \ar[rd] && K \ar@{{ >}->}[dd]_(.25){k}|{\hole} \ar@{->}[rd] \\
& R' \ar@{-{>}}[rr] \ar@{{ >}->}[dd]_(.25){\links d',c'\rechts} && R \ar@{{ >}->}[dd]^-{\links d,c\rechts} \\
Y \ar@{-{>}}[rr]^(.75){f}|{\hole} \ar[rd]_-{\links 0,1_{Y}\rechts} && X \ar[rd]_(.3){\links 0,1_{X}\rechts}\\
& Y\times Y \ar[rr]_-{f\times f} && X\times X.}
\]
Note that $(R',d',c')$ is a reflexive relation on $Y$. Since the
front, back, and right hand side faces of this cube are all
pullbacks, also its left hand side face is a pullback. This means
that $k'$ is the zero-class of~$R'$, so that the pullback $k'$ of
$k$ is a clot.

Now consider two clots $k\colon{K\to X}$ and $l\colon{L\to X}$ on
an object $X$, the respective zero-classes of the reflexive
relations $(R,d,c)$ and $(S,d',c')$ on $X$. Consider the cube
\[
\xymatrix@!0@=4em{K\cap L \ar@{{ >}->}[dd] \ar@{-{>}}[rr] \ar@{{ >}->}[rd] && R\cap S \ar@{{ >}->}[dd]|{\hole} \ar@{{ >}->}[rd] \\
& K \ar@{-{>}}[rr] \ar@{{ >}->}[dd]_(.25){k} && R \ar@{{ >}->}[dd]^-{\links d,c\rechts} \\
L \ar@{-{>}}[rr]|{\hole} \ar@{{ >}->}[rd]_-{l} && S \ar@{{ >}->}[rd]_(.3){\links d',c'\rechts}\\
& X \ar[rr]_-{\links 0,1_{X}\rechts} && X\times X,}
\]
in which all faces are pullbacks. We see that the monomorphism
${K\cap L\to X}$ is the zero-class of the reflexive relation
${R\cap S\to X\times X}$ on $X$. In other words, the intersection
of the clots $k$ and $l$ is still a clot.
\end{proof}

\begin{proposition}\label{stability}
Ideals are stable under:
\begin{enumerate}
\item regular images; \item pullbacks; \item compositions with
product inclusions.
\end{enumerate}
Furthermore,
\begin{enumerate}
\setcounter{enumi}{3} \item an intersection of an ideal and a clot
is an ideal; \item an intersection of two ideals is an ideal.
\end{enumerate}
\end{proposition}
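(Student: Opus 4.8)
Throughout I represent an ideal $i\colon I\to Y$ as the regular image of a clot $k\colon K\to A$ along a regular epimorphism $p\colon A\to Y$, so that $pk=iq$ with $q\colon K\to I$ a regular epimorphism and $i$ a monomorphism. I use freely that composites and pullbacks of regular epimorphisms are regular epimorphisms, that pullbacks preserve monomorphisms, and the stability properties of clots from Proposition~\ref{stability clots}. Item~(1) is then immediate: if $g\colon Y\to Z$ is a regular epimorphism and $gi=je$ is the $(\text{regular epi},\text{mono})$-factorisation of $gi$, then $j(eq)=g(iq)=(gp)k$ with $eq$ a regular epimorphism, so $j$ is the regular image of the clot $k$ along the regular epimorphism $gp$, hence an ideal. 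For item~(2) and a morphism $f\colon Z\to Y$, I pull $p$ back along $f$ to a regular epimorphism $p'\colon A'\to Z$ and then pull the clot $k$ back along $A'\to A$; by Proposition~\ref{stability clots} this yields a clot $k'\colon K'\to A'$, and by the pasting lemma $p'k'$ is the pullback of $pk$ along $f$. As the change-of-base functor $f^{*}$ preserves both regular epimorphisms and monomorphisms, it sends the image factorisation of $pk$ to that of $p'k'$; thus the pullback of $i$ is the regular image of the clot $k'$ along $p'$, an ideal.

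For item~(3) I first observe that clots are preserved by product inclusions. If $k$ is the zero-class of a reflexive relation $R$ on $A$, then $\langle k,0\rangle\colon K\to A\times B$ is the zero-class of $R\times\Delta_B$, where $\Delta_B$ is the discrete relation on $B$: indeed the zero-class of a product of relations is the product of the zero-classes, since pullbacks commute with products, and the zero-class of $\Delta_B$ is the zero subobject. Now $p\times 1_B\colon A\times B\to Y\times B$ is the pullback of $p$ along the projection $Y\times B\to Y$, hence a regular epimorphism, and $(p\times 1_B)\langle k,0\rangle=\langle 1_Y,0\rangle\,(pk)$. Because $\langle 1_Y,0\rangle$ is a monomorphism, the image of this composite is $\langle 1_Y,0\rangle\, i$, so the composite of the ideal $i$ with the product inclusion is the regular image of the clot $\langle k,0\rangle$ along the regular epimorphism $p\times 1_B$, hence an ideal; the symmetric inclusion is handled identically.

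The content of items~(4) and~(5) rests on one projection identity. For any subobject $W$ of $Y$ one has, as subobjects of $K$, $k^{-1}(p^{-1}(W))=(pk)^{-1}(W)=(iq)^{-1}(W)=q^{-1}(i^{-1}(W))$; since $q$ is a regular epimorphism, its pullback along the monomorphism $i^{-1}(W)\to I$ — which is the restriction of $q$ to $K\cap p^{-1}(W)$ — is again a regular epimorphism, and therefore the regular image of $K\cap p^{-1}(W)$ along $p$ is precisely $I\cap W$. Taking $W=C$ a clot, Proposition~\ref{stability clots} makes $p^{-1}(C)$ a clot and hence $K\cap p^{-1}(C)$ a clot, whose regular image $I\cap C$ is thus an ideal, proving~(4). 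Taking $W=J$ a second ideal, item~(2) makes $p^{-1}(J)$ an ideal of $A$, item~(4) makes $K\cap p^{-1}(J)$ an ideal of $A$, and item~(1) makes its regular image $I\cap J$ an ideal of $Y$, proving~(5).

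The routine items are~(1) and~(2); the genuine difficulty is the projection identity of the last paragraph, where one must both recognise the relevant intersections as regular images of clots and check that the restriction of $q$ remains a regular epimorphism — this is exactly where pullback-stability of regular epimorphisms along monomorphisms is used. The other point needing care is the claim in~(3) that $\langle k,0\rangle$ is the zero-class of $R\times\Delta_B$, which should be verified by writing out the defining pullback and using that it distributes over the product.
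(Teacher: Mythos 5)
Your proof is correct and follows essentially the same route as the paper's: represent the ideal as the regular image of a clot, invoke Proposition~\ref{stability clots}, and observe (your ``projection identity'', the paper's cube with a pullback back face) that the restriction of $q$ over $I\cap W$ remains a regular epimorphism. The only real variation is in item~(3), where you prove directly that clots compose with product inclusions by exhibiting $\langle k,0\rangle$ as the zero-class of $R\times\Delta_B$, whereas the paper instead reduces to the case where $k$ is a kernel; both justifications are valid.
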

\begin{proof}
(a) is immediate from the definition. (b) holds because, given a
morphism $f\colon{Y'\to Y}$ and an ideal $i$ which is a regular
image of a clot $k$ as in~\eqref{ideal}, we may consider the
commutative cube
\[
\xymatrix@!0@=4em{K' \ar@{->}[dd] \ar@{.{>>}}[rr]^-{q'} \ar@{{ >}->}[rd]^{k'} && I' \ar[dd]|{\hole} \ar@{{ >}->}[rd]^-{i'} \\
& X' \pullback \ar@{.{>>}}[rr]^(.25){p'} \ar@{->}[dd]_(.25){f'} && Y' \ar@{->}[dd]^-{f} \\
K \ar@{-{>>}}[rr]^(.25){q}|{\hole} \ar@{{ >}->}[rd]_-{k} && I \ar@{{ >}->}[rd]_-{i}\\
& X \ar@{-{>>}}[rr]_-{p} && Y}
\]
in which the front, left and right squares are pullbacks by
construction. It follows that the back square is also a pullback,
and the dotted arrows $p'$ and $q'$ are regular epimorphisms.
Furthermore, the monomorphism $k'$ is a clot by
Proposition~\ref{stability clots}. As a consequence, the pullback
$i'$ of $i$ along $f$ is an ideal, as a regular image of the
clot~$k'$.

For the proof of (c), recall that kernels compose with product
inclusions: if $k\colon K\to X$ is the kernel of $f\colon X\to
X'$, then $\links 1_{X},0\rechts k=\links k,0\rechts\colon {K\to
X\times W}$ is the kernel of $f\times 1_W\colon{X\times W\to
X'\times W}$ for any object $W$. If now $i$ is an ideal as
in~\eqref{ideal}, then $\links 1_{Y},0\rechts i=\links
i,0\rechts\colon {I\to Y\times W}$ is the direct image of $\links
k,0\rechts$ along the regular epimorphism $f\times 1_{W}$.

For the proof of (d), suppose $i$ is an ideal as in~\eqref{ideal}
and $l\colon {L\to Y}$ is a clot. We consider the commutative cube
\[
\xymatrix@!0@=4em{K\cap L' \ar@{{ >}->}[dd] \ar@{.{>}}[rr] \ar@{{ >}->}[rd] && I\cap L \ar@{{ >}->}[dd]|{\hole} \ar@{{ >}->}[rd] \\
& L' \pullback \ar@{-{>>}}[rr] \ar@{{ >}->}[dd]_(.25){l'} && L \ar@{{ >}->}[dd]^-{l} \\
K \ar@{-{>>}}[rr]^(.25){q}|{\hole} \ar@{{ >}->}[rd]_-{k} && I \ar@{{ >}->}[rd]_(.3){i}\\
& X \ar@{-{>>}}[rr]_-{p} && Y}
\]
in which the front, left and right squares are pullbacks by
construction. Then the back square is also a pullback, so that the
dotted arrow is a regular epimorphism. Since the monomorphism $
l'$, and thus also ${K\cap L'\to X}$, are still clots by
Proposition~\ref{stability clots}, this proves that the
intersection ${I\cap L\to Y}$ is an ideal.

For the proof of (e), suppose that both $i$ and $l$ are ideals.
Repeating the above construction, through (b) and (d) we see that
${K\cap L'\to X}$ is an ideal, as the intersection of the clot $k$
with the ideal $l'$. The result now follows from (a).
\end{proof}

\section{Ideals and semi-split surjective relations}

In order to characterise ideals as zero-classes, we shall be
interested in spans where one of the legs is a regular or even a
split epimorphism.

\begin{definition}
A \defn{left split span} from $X$ to $Y$ is a diagram
\begin{equation}\label{left split span}
\vcenter{\xymatrix@!0@=4em{& R \ar@<-.5ex>[ld]_-{d} \ar[rd]^-{c} \\
X \ar@<-.5ex>[ru]_-{e} && Y}}
\end{equation}
where $de=1_{X}$. A left split span $(d,c,e)$ is called a
\defn{left split relation} when the span $(d,c)$ is jointly monomorphic.
\end{definition}

\begin{proposition}\label{monomorphisms}
For a morphism $i\colon I\to Y$, the following conditions are
equivalent:
\begin{tfae}
\item $i$ is a monomorphism;

\item $i$ is the zero-class of a left split relation;

\item $i$ is the zero-class of a relation on~$Y$.
\end{tfae}
\end{proposition}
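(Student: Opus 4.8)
The plan is to prove the cyclic chain of implications (i)\implies(ii)\implies(iii)\implies(i), exploiting the fact that a zero-class is always a monomorphism when the span is a relation (as already noted in the excerpt just after Definition~\ref{zero-class}, since pullbacks preserve monomorphisms).

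The implications (ii)\implies(iii) and (iii)\implies(i) should be essentially immediate. For (ii)\implies(iii), a left split relation is in particular a relation, and although the stated left split relation goes from $X$ to $Y$ rather than being an endorelation on $Y$, the point is that one can always turn a relation from $X$ to $Y$ whose zero-class is $i\colon I\to Y$ into an endorelation on $Y$ with the same zero-class; alternatively, (iii) merely asks for \emph{some} relation on $Y$, so the real content is producing an endorelation on $Y$ from the left split relation of (ii). For (iii)\implies(i), if $i$ is the zero-class of any relation, then by the remark after Definition~\ref{zero-class} it is automatically a monomorphism, so there is nothing to do.

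The substantive implication is (i)\implies(ii): given an arbitrary monomorphism $i\colon I\to Y$, I must manufacture a \emph{left split} relation from some $X$ to $Y$ whose zero-class is exactly $i$. First I would look for the simplest possible candidate. The cleanest construction is to take $X=Y$ and build the relation whose underlying span has $R$ sitting inside $Y\times Y$ as the union (or join) of the diagonal-type data forcing reflexivity-of-the-split together with the pairs $\links 0,i\rechts$ coming from $I$. Concretely, I expect to set $R$ to be a subobject of $X\times Y$ (with $X=Y$) containing the image of $\links 1_Y,0\rechts\colon Y\to Y\times Y$, which furnishes the splitting $e$ with $de=1_Y$, and also containing $\links 0,i\rechts\colon I\to Y\times Y$, which is precisely what is pulled back along $\links 0,1_Y\rechts$ to recover $i$ as the zero-class. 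Taking $R$ to be the join of these two subobjects in the (regular, hence well-behaved for joins of subobjects via regular images) category should give a left split relation whose zero-class, computed by the pullback in Diagram~\eqref{zero-class}, is $i$.

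The step I expect to be the main obstacle is verifying that this join $R$ has $i$ as its zero-class \emph{on the nose}, i.e. that pulling $\links d,c\rechts\colon R\to X\times Y$ back along $\links 0,1_Y\rechts$ returns exactly $I$ and not something larger: the join may a priori introduce extra elements in the fibre over $0$ in the first coordinate. The reflexivity-splitting piece $\links 1_Y,0\rechts$ contributes only the single point $0\in I$ to that fibre (since its first coordinate is $0$ only when the point of $Y$ is $0$), so the two contributions to the zero-class should be $I$ itself together with a redundant copy of $0$, and these glue to $I$. Making this precise in a general pointed regular category requires a careful diagram chase with pullbacks and regular images rather than element-wise reasoning; I would phrase it by showing that the pullback of each of the two generating subobjects along $\links 0,1_Y\rechts$ is contained in $i$, and that $i$ factors through the pullback of the join, so that the two monomorphisms coincide. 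Once this identification is done, checking that $(d,c)$ is jointly monic (it is a subobject of $X\times Y$ by construction) and that $de=1_Y$ holds (from the reflexivity piece) is routine, completing (i)\implies(ii).
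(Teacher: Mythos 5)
There is a genuine gap, and the argument is also far more complicated than it needs to be. The paper's proof is a two-line observation: for (i)\implies(ii) take $X=0$, $R=I$, $c=i$ and $d\colon I\to 0$ the unique map (which is split by the unique map $e\colon 0\to I$); for (i)\implies(iii) take the span $(0\colon I\to Y,\,i\colon I\to Y)$ on $Y$. In both cases $\links d,c\rechts$ is a monomorphism precisely when $i$ is, and the pullback along $\links 0,1_Y\rechts$ visibly returns $i$ itself. Your construction for (i)\implies(ii) --- the join of $\links 1_Y,0\rechts$ and $\links 0,i\rechts$ inside $Y\times Y$ --- runs into two problems. First, a pointed regular category need not have binary coproducts, so the join of two subobjects need not exist; the construction is not even available in the stated generality. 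Second, and more seriously, even when joins exist they are not stable under pullback in a mere regular category (that requires coherence or some exactness beyond regularity), so the zero-class of the join can be strictly larger than the join of the zero-classes of the two generators. You correctly identify this as ``the main obstacle'', but your proposed resolution --- checking that the pullback of each generating subobject factors through $i$ --- does not close it, since that only bounds the join of the two pullbacks, not the pullback of the join.

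The step (ii)\implies(iii) is also not justified as written: you assert that a relation from $X$ to $Y$ with zero-class $i$ can always be turned into an endorelation on $Y$ with the same zero-class, but no construction is given, and the paper's own Example in Section~3 shows that the analogous conversion for \emph{surjective} relations is impossible in general, so such a claim should not be made casually. The safe (and intended) route is (ii)\implies(i)\implies(iii): the zero-class of any relation is a monomorphism (the remark you cite), and then the explicit span $(0,i)$ witnesses (iii). With that witness in hand, the entire proposition reduces to the paper's one-liner and none of the join machinery is needed.
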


\begin{proof}
For the equivalence between (i) and (ii) it suffices to take
$X=0$, and for the one between (i) and (iii) we consider the span
$(0\colon{I\to Y},i\colon {I\to Y})$. In both cases the span at
hand is a relation if and only if $i$ is a monomorphism.
\end{proof}

\begin{definition}
A \defn{surjective span} from $X$ to $Y$ is a diagram
\[
\xymatrix@!0@=4em{& R \ar[ld]_-{d} \ar@{-{>>}}[rd]^-{c} \\
X && Y}
\]
where $c$ is a regular epimorphism. A surjective span $(d,c)$ is
called a \defn{surjective relation} when the span $(d,c)$ is
jointly monomorphic.
\end{definition}

Sometimes we consider both conditions together and talk about
\defn{surjective left split} spans or relations.

We are now ready to prove our main result.

\begin{theorem}\label{theorem}
In any pointed regular category, for any morphism $i\colon {I\to
Y}$, the following are equivalent:
\begin{tfae}
\item $i$ is an ideal; \item $i$ is the zero-class of a surjective
left split relation; \item $i$ is the zero-class of a surjective
relation.
\end{tfae}
\end{theorem}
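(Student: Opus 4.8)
The plan is to prove the cycle of implications $\text{(ii)} \Rightarrow \text{(iii)} \Rightarrow \text{(i)} \Rightarrow \text{(ii)}$. The implication $\text{(ii)} \Rightarrow \text{(iii)}$ is trivial, since a surjective left split relation is in particular a surjective relation, and the zero-class construction is the same. So the real content lies in the other two implications.

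For $\text{(iii)} \Rightarrow \text{(i)}$, suppose $i\colon I \to Y$ is the zero-class of a surjective relation $(d,c)$ from $X$ to $Y$, so $c\colon R \twoheadrightarrow Y$ is a regular epimorphism. By Proposition~\ref{zero-class = normalisation}, the zero-class $i$ coincides with the normalisation, which is $ck$ where $k\colon K \to R$ is a kernel of $d$. Now a kernel is in particular a clot (being the zero-class of an effective equivalence relation, hence of a reflexive relation). The idea is then to exhibit $i$ as the regular image of this clot $k$. The difficulty is that the codomain of $k$ is $R$, not an object over $Y$ in the shape required by diagram~\eqref{ideal}. I would resolve this by forming the kernel pair of $c$ (or otherwise factoring) to produce a square with $K$ on the top-left, the clot on the left leg, $I$ on the top-right, and $i$ on the right leg, with horizontal arrows being regular epimorphisms induced by restricting $c$. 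Since $c$ is a regular epimorphism and regular epimorphisms are stable under the relevant constructions, the restriction $K \to I$ will be a regular epimorphism as well, exhibiting $i$ as a regular image of a clot, hence an ideal.

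For the hard direction $\text{(i)} \Rightarrow \text{(ii)}$, suppose $i$ is an ideal, so there is a square~\eqref{ideal} with $p$, $q$ regular epimorphisms and $k\colon K \to X$ a clot. By Definition~\ref{clot categories}, $k$ is the zero-class of a reflexive relation $(R, d, c)$ on $X$; equivalently, by Proposition~\ref{zero-class = normalisation}, $k = ck'$ for a kernel $k'$ of $d$, and the reflexivity gives a section $e$ of $d$. The strategy is to build a surjective left split relation from $X$ to $Y$ whose zero-class is precisely $i$. The natural candidate is obtained by pushing the reflexive relation $(R,d,c)$ forward along the regular epimorphism $p\colon X \twoheadrightarrow Y$: one forms the span whose right leg is the composite $p\product c\colon R \to Y$ (which should be arranged to be a regular epimorphism, giving surjectivity) and whose left leg is $d\colon R \to X$ (which is split by $e$, giving the left split condition). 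The main obstacle is that this pushed-forward span need not be jointly monomorphic, so it is a surjective left split span but perhaps not a relation; to fix this I would take its image factorisation through $X \times Y$, replacing $R$ by the jointly-monomorphic image $\bar{R}$. One then checks that the left split structure and the surjectivity of the right leg survive this factorisation (using regularity), and that the zero-class of $\bar R$ recomputes to $i$ — precisely because the zero-class is computed by a pullback along $\langle 0, 1_Y\rangle$, and the regular image square~\eqref{ideal} identifies the fibre over $0$ on the $X$-side with $I$ over $Y$. Verifying that taking the image does not disturb the zero-class is the delicate point, and I expect it to rely on the interplay between regular images, pullbacks along $\langle 0,1_Y\rangle$, and the commutativity of image factorisation with pullback along monomorphisms in a regular category.
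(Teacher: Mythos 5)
Your argument for the hard direction (i) \implies\ (ii) is essentially the paper's own: the paper likewise takes $S$ to be the regular image of $R$ under $1_X\times p$ (i.e.\ the image of $\langle d,pc\rangle \colon R\to X\times Y$), splits the left leg by composing $e$ with ${R\twoheadrightarrow S}$, and gets surjectivity of the right leg from the fact that it covers the regular epimorphism $pc$. The only difference is in how the ``delicate point'' is settled: the paper chases a cube to show that the comparison map from $I$ to the zero-class of $S$ is both a monomorphism and a regular epimorphism, whereas the tool you name --- stability of image factorisations under pullback in a regular category --- closes it in one line: pulling the factorisation $R\twoheadrightarrow S\rightarrowtail X\times Y$ back along $\langle 0,1_Y\rangle$ yields the image factorisation of $pk=iq$, whose image is $i$ by~\eqref{ideal}; so your sketch does complete, arguably more cleanly (the restriction ``along monomorphisms'' is unnecessary, since stability along arbitrary morphisms is exactly the regularity axiom). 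For (iii) \implies\ (i) you genuinely diverge from, and simplify, the paper: the paper builds an auxiliary reflexive relation $T$ on $R$ (the pullback of $\langle d,c\rangle$ along $d\times c$), whose zero-class is the larger clot $c^{-1}(I)$, and then maps it onto $I$; you use $\ker(d)$ directly. That works, but note that the ``difficulty'' you raise is illusory: diagram~\eqref{ideal} only asks for \emph{some} object equipped with a regular epimorphism onto $Y$, and $c\colon R\to Y$ is exactly that. By Proposition~\ref{zero-class = normalisation} the zero-class is literally $l=\ker(d)\colon I\to R$ with $i=cl$, and $l$ is a clot of $R$ (the zero-class of the effective equivalence relation $\mathrm{Eq}(d)$), so the square with top leg $1_I$, left leg $l$, bottom leg $c$ and right leg $i$ already exhibits $i$ as an ideal --- no kernel pair of $c$ is involved, and that slip should be corrected. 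Net assessment: same construction as the paper where it matters, a shorter route for (iii) \implies\ (i), with two loose formulations that are easily repaired.
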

\begin{proof}
To prove (i) \implies\ (ii), suppose that $i$ is an ideal as
in~\eqref{ideal} above, where $k$ is the zero-class of a reflexive
relation $(R,d,c,e)$. We consider the commutative cube
\[
\xymatrix@!0@=4em{K \ar@{{ >}->}[dd]_-{k} \ar@{-{>>}}[rr]^{q} \ar[rd] && I \ar@{{ >}->}[dd]_(.25){i}|{\hole} \ar@{.>}[rd] \\
& R \ar@{.{>>}}[rr]^(.25){q'} \ar@{{ >}->}[dd]_(.25){\links d,c\rechts} && S \ar@{{ >}.>}[dd]^-{\links d',c'\rechts} \\
X \ar@{-{>>}}[rr]^(.25){p}|{\hole} \ar[rd]_-{\links 0,1_{X}\rechts} && Y \ar[rd]_(.3){\links 0,1_{Y}\rechts}\\
& X\times X \ar@{-{>>}}[rr]_-{1_{X}\times p} && X\times Y}
\]
in which $S$ is the regular image of $R$ along $1_{X}\times p$ and
${I\to S}$ is induced by functoriality of image factorisations. We
have to show that the square on the right is a pullback. Let the
square on the left
\[
\vcenter{\xymatrix@!0@=4em{P \ar[r] \ar[d] \pullback & S \ar@{{ >}->}[d]\\
Y \ar[r]_-{\links 0,1_{Y}\rechts} & X\times Y}} \qquad\qquad\qquad
\vcenter{\xymatrix@!0@=4em{K \ar@{{ >}->}[dd]_-{k} \ar@{.>}[rr] \ar[rd] && P \ar@{{ >}->}[dd]|{\hole} \ar@{->}[rd] \\
& R \ar@{-{>>}}[rr]^(.25){q'} \ar@{{ >}->}[dd]_(.25){\links d,c\rechts} && S \ar@{{ >}->}[dd]^-{\links d',c'\rechts} \\
X \ar@{-{>>}}[rr]^(.25){p}|{\hole} \ar[rd]_-{\links 0,1_{X}\rechts} && Y \ar[rd]_(.3){\links 0,1_{Y}\rechts}\\
& X\times X \ar@{-{>>}}[rr]_-{1_{X}\times p} && X\times Y}}
\]
be the pullback in question. The induced arrow $f \colon I\to P$
is an isomorphism. Indeed it is a monomorphism since $i$ is.
Moreover, the bottom and left squares in the cube are pullbacks,
and so the dotted arrow $K\to P$ is a regular epimorphism, being a
pullback of the regular epimorphism $q'$. Then $f$ is a regular
epimorphism, hence an isomorphism. Note that $d'$ is split
by~$q'e$ and $c'$ is a regular epimorphism because $pc=c'q'$ is.

(ii) \implies\ (iii) is obvious. For the proof of (iii) \implies\
(i), let $i\colon{I\to Y}$ be the zero-class \eqref{zero-class} of
a surjective relation $(d,c)$. Consider the pullback
\[
\vcenter{\xymatrix@!0@=5em{
T \ar@{.>}[r] \pullbackdots \ar@{{ >}.>}[d]_-{\links d',c'\rechts} & R \ar@{{ >}->}[d]^{\links d,c\rechts} \\
R\times R \ar[r]_-{d\times c} & X\times Y}}
\]
of $\links d,c\rechts$ and $d\times c$, which defines a reflexive
relation $(T,d',c',e')$ on $R$, where $e'$ is $\links\links
1_{R},1_{R}\rechts,1_{R}\rechts$. We prove that $i$ is the regular
image of the zero-class $k$ of $T$ along the regular epimorphism
$c$ as in the square on the left.
\[
\vcenter{\xymatrix@!0@=4em{K \ar@{{ >}->}[d]_-{k} \ar@{-{>>}}[r]^{q}& I \ar@{{ >}->}[d]^{i} \\
R \ar@{-{>>}}[r]_-{c} & Y}} \qquad\qquad\qquad
\vcenter{\xymatrix@!0@=4em{K \ar@{{ >}->}[dd]_-{k} \ar@{.{>}}[rr]^{q} \ar[rd] && I \ar@{{ >}->}[dd]_(.25){i}|{\hole} \ar@{->}[rd] \\
& T \ar@{-{>>}}[rr] \ar@{{ >}->}[dd]_(.25){\links d',c'\rechts} && R \ar@{{ >}->}[dd]^{\links d,c\rechts} \\
R \ar@{-{>>}}[rr]^(.25){c}|{\hole} \ar[rd]_-{\links 0,1_{R}\rechts} && Y \ar[rd]_(.3){\links 0,1_{Y}\rechts}\\
& R\times R \ar@{-{>>}}[rr]_-{d\times c} && X\times Y}}
\]
Here it suffices to consider the cube on the right, noting that
$q$ is a regular epimorphism because all vertical squares are
pullbacks and $c$ is a regular epimorphism by assumption.
\end{proof}

\begin{remark}
Consider a pointed variety of universal algebras $\V$ and let $A
\in \V$. According to the previous theorem, a subalgebra $I$ of
$A$ is an ideal of $A$ if and only if there is a surjective
relation~$R$ for which $I$ is the zero-class of $R$. In other
words, $I$ is an ideal if and only if there exists a subalgebra
$R$ of $B\times A$, for some $B\in \V$, such that the second
projection is surjective and $a\in I$ if and only if $(0,a)\in R$.
A direct proof of this is in fact pretty simple. That such a
zero-class is an ideal is trivial from Higgins' definition of
ideals by means of ideal terms (Definition \ref{ideal universal
algebra}). For the converse, assume that $I$, as an ideal of $A$,
is the image $f(K)$ of a clot $K$ of some $B\in \V$ under a
surjective homomorphism $f\colon{B\to A}$. Then $K$ is the
zero-class of some reflexive subalgebra $S$ of $B\times B$. One
easily sees that the relational product $f\product S$, where now
$f$ means ``the set-theoretic graph of the mapping $f$'', is a
surjective relation, whose zero-class is exactly $I$.
\end{remark}

As the following example shows, in general it is not possible to
see every ideal as a zero-class of a surjective endorelation. We
are grateful to Sandra Mantovani for suggestions concerning this
example.

\begin{example}
Let $\V$ be the variety defined by a unique constant $0$ and a
binary operation $s$ satisfying just the identity $s(0,0)=0$. In
this variety, ideal terms are all ``pure'': in any term
$t(x_1,\dots,x_m,y_1,\dots,y_n)$ which is an ideal term in $y_1$,
\dots, $y_n$, necessarily $m=0$. Therefore all subalgebras are
ideals. Consider then the three element algebra $A=\{0,1,a\}$,
with $s(a,1)=s(1,a)=s(a,a)=a$, and $s(x,y)=0$ otherwise.
$C=\{0,1\}$ is a subalgebra, and we have that $s(a,0)=0$ lies in
$C$, but $s(a,1)=a$ does not belong to $C$. Hence $C$ is an ideal,
but not a clot. Suppose that there exists a surjective relation
$R$ on $A$ such that $C$ is its zero-class. Then there should
exist $x \in A$ such that $xRa$. But $x$ cannot be $0$, because $a
\notin C$. $1Ra$ is impossible, too, because otherwise $s(1,1) R
s(a,a)$, while $s(1,1)=0$ and $s(a,a)=a$. Similarly, $aRa$ is
impossible, otherwise $s(0,a) R s(1,a)$, while $s(0,a)=0$ and
$s(1,a)=a$. Hence such a surjective endorelation $R$ does not
exist.
\end{example}

We conclude this section with the following observation. It is
well known~\cite{MM-NC} that, in any pointed exact Mal'tsev
category, ideals and kernels coincide. Theorem~\ref{theorem}
provides us with the following quick argument.

\begin{proposition}\label{pointed exact Mal'tsev}
In any pointed exact Mal'tsev category, ideals and kernels
coincide.
\end{proposition}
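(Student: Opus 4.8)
The plan is to leverage Theorem~\ref{theorem} together with two well-known facts about pointed exact Mal'tsev categories. In such categories every reflexive relation is automatically an equivalence relation (the Mal'tsev condition), and every equivalence relation is effective (exactness), so equivalence relations are precisely kernel pairs. Since kernels are the zero-classes of effective equivalence relations (as noted after Definition~\ref{normal subobject}), this means clots and kernels already coincide: every clot is the zero-class of a reflexive relation, which is an effective equivalence relation, hence a kernel. So the inclusion $\N(A)\subseteq\Cl(A)$ becomes an equality, and it remains only to collapse the top of the chain, showing that every ideal is a kernel.

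First I would invoke the characterisation (iii) \implies\ (i) direction in reverse: by Theorem~\ref{theorem}, an ideal $i\colon I\to Y$ is the zero-class of a surjective relation $(d,c)$ from some $X$ to $Y$. The idea is to show that in the exact Mal'tsev setting this surjective relation can be replaced by an effective equivalence relation on $Y$ with the same zero-class. One natural route is to use that regular images of kernels along regular epimorphisms are kernels in this context: by the Proposition following the ideal definition, $i$ is the regular image of a kernel along a regular epimorphism, and in a regular Mal'tsev category with the appropriate exactness the direct image of a kernel along a regular epimorphism is again a kernel (this is a standard consequence of the fact that normal subobjects are stable under regular images in pointed exact Mal'tsev categories, equivalently that the Smith--Huq condition holds automatically there).

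The cleanest argument, which I expect the authors to use, is to observe that in a pointed exact Mal'tsev category every ideal coincides with a clot, and then invoke the collapse of clots into kernels described above. Concretely, the surjective relation $(d,c)$ representing $i$ via Theorem~\ref{theorem} gives an ideal as a regular image of a clot; but since the reflexive relation producing that clot is already an effective equivalence relation, its zero-class is a kernel, and kernels are stable under the regular images appearing in Diagram~\eqref{ideal}. Thus $i$ itself is a kernel. The reverse inclusion, that every kernel is an ideal, is immediate since kernels are zero-classes of (effective) equivalence relations, which are in particular surjective relations, so Theorem~\ref{theorem} applies.

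The main obstacle will be justifying that the regular image of a kernel along a regular epimorphism stays a kernel; this is exactly the content of the \emph{Smith is Huq} condition holding in pointed exact Mal'tsev categories, and it is what prevents the same argument from working in a merely regular pointed category. I would either cite this stability of normal subobjects directly from~\cite{MM-NC} or reconstruct it from the fact that in an exact Mal'tsev category the connector/centralisation machinery makes every reflexive relation effective, so that the distinction between ideal, clot, and kernel evaporates. Once that stability is in hand, the two inclusions $\N(A)\subseteq\I(A)$ and $\I(A)\subseteq\N(A)$ combine to give the claimed equality.
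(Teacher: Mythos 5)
Your overall strategy is viable but it is not the paper's, and one of your justifications is wrong. The paper's proof is much shorter: it takes the surjective \emph{left split} relation $(d,c,e)$ whose zero-class is the ideal $i$ (Theorem~\ref{theorem}, (i)\,\implies\,(ii)) and applies Theorem~5.7 of Carboni--Kelly--Pedicchio, which in an exact Mal'tsev category makes the pushout of the split epimorphism $d$ along the regular epimorphism $c$ also a pullback; since in a pullback square the kernels of the two parallel legs agree, $i=c\ker(d)$ is literally the kernel of $c_{*}(d)$. No stability of normal subobjects under direct images is needed. Your route --- ideal $=$ regular image of a kernel along a regular epimorphism, and such images are again kernels --- is the classical argument the paper attributes to \cite{MM-NC} and explicitly sets out to bypass; it works if you simply cite that stability result, but it is a genuinely different and longer path.

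The concrete error is in your justification of the key lemma: the stability of kernels under direct images along regular epimorphisms is \emph{not} ``exactly the content of the \emph{Smith is Huq} condition holding in pointed exact Mal'tsev categories''. \SH\ is a nontrivial extra condition even in the semi-abelian setting (it fails, for instance, for digroups and for loops), which is precisely why Section~4 of the paper studies it; and even where it does hold, \SH\ concerns commutation of relations versus their zero-classes, not direct images of normal subobjects. What you actually need is that in an exact Mal'tsev (indeed Goursat) category the direct image of an equivalence relation along a regular epimorphism is again an equivalence relation, hence effective by exactness, together with a cube argument as in the proof of Theorem~\ref{theorem} identifying the zero-class of the direct image with the direct image of the zero-class. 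Your alternative sketch (``every reflexive relation is effective, so the distinction evaporates'') does not supply that direct-image step either. The converse inclusion, that kernels are ideals, you handle correctly.
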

\begin{proof}
Let $i\colon {I\to Y}$ be the zero-class of a surjective left
split relation $(d,c,e)$ as in~\eqref{left split span}.
Theorem~5.7 in~\cite{Carboni-Kelly-Pedicchio} tells us that the
pushout of $d$ and $c$ is also a pullback; as a consequence, $i$
is the kernel of the pushout $c_{*}(d)$ of $d$ along $c$.
\end{proof}

\section{The \emph{Smith is Huq} condition}

From now on we work in a category which is pointed, regular and
weakly Mal'tsev~\cite{NMF1}. We first recall

\begin{definition} \label{weakly Mal'tsev category}
A finitely complete category is weakly Mal'tsev if, for any
pullback of the form:
\begin{equation} \label{pullback weakly Mal'tsev}
\vcenter{\xymatrix@!0@=4em{ A \times_B C \ar@<-.5ex>[d]_{\pi_1}
\ar@<-.5ex>[r]_-{\pi_2} & C \ar@<-.5ex>[d]_g \ar@<-.5ex>[l]_-{e_2} \\
A \ar@<-.5ex>[u]_{e_1} \ar@<-.5ex>[r]_f & B, \ar@<-.5ex>[u]_s
\ar@<-.5ex>[l]_r }}
\end{equation}
where $fr = 1_B = gs$, the morphisms $e_{1}=\links1_A,s\comp f
\rechts$ and $e_{2}=\links r\comp g,1_C \rechts$, induced by the
universal property of the pullback, are jointly epimorphic.
\end{definition}

We observe that any finitely complete Mal'tsev category
\cite{Carboni-Lambek-Pedicchio} is weakly Mal'tsev. Indeed, in
\cite{Bourn1996} it was proved that a finitely complete category
is Mal'tsev if, for any pullback of the form \eqref{pullback
weakly Mal'tsev}, the morphisms $e_{1}=\links1_A,s\comp f \rechts$
and $e_{2}=\links r\comp g,1_C \rechts$ are jointly
\emph{strongly} epimorphic. In particular, every Mal'tsev variety
\cite{Malcev} is a weakly Mal'tsev category. In \cite{Nelson1} it
is shown that the variety of distributive lattices is weakly
Mal'tsev. In \cite{Nelson2} several other examples are given,
amongst which the variety of commutative monoids with cancelation.

In the context of weakly Mal'tsev categories, we say that two left
split spans $(f,\alpha,r)$ and $(g,\gamma,s)$ from $B$ to~$D$ as
in
\begin{equation}\label{adm}
\vcenter{\xymatrix@!0@=4em{A \ar@<.5ex>[r]^-{f} \ar[rd]_-{\alpha}
& B \ar@<.5ex>[l]^-{r} \ar@<-.5ex>[r]_-{s}
 & C \ar@<-.5ex>[l]_-{g} \ar[ld]^-{\gamma}\\
& D}}
\end{equation}
\defn{centralise each other} or \defn{commute} when there exists a
(necessarily unique) morphism $\varphi\colon{A\times_B C\to D}$,
called \defn{connector} from the pullback
\begin{equation*}\label{kite}
\vcenter{\xymatrix@!0@=3em{ & C \ar@<.5ex>[ld]^-{e_2}
\ar@<-.5ex>[rd]_-{g}
\ar@/^/[rrrd]^-{\gamma} \\
A\times_{B}C \ar@<.5ex>[ru]^-{\pi_2} \ar@<-.5ex>[rd]_-{\pi_1} && B
\ar@<.5ex>[ld]^-{r} \ar@<-.5ex>[lu]_-{s}
 \ar@{.>}[rr]|-{\beta} && D\\
& A \ar@<.5ex>[ru]^-{f} \ar@<-.5ex>[lu]_-{e_1}
\ar@/_/[urrr]_-{\alpha}}}
\end{equation*}
of $f$ and $g$ to the object $D$ such that $\varphi\comp
e_1=\alpha$ and $\varphi \comp e_2=\gamma$. Note that, when this
happens, $\gamma s=\alpha r$; we denote this morphism by
$\beta\colon B\to D$. In other words, the existence of $\beta$ is
a necessary condition for the given left split spans to centralise
each other. The condition for two left split spans to commute was
called \emph{admissibility} in~\cite{MF-PhD}. There it was
implicit that such a condition deals with a certain type of
commutativity, but it was not possible to express precisely
\emph{what} commutes. Our new interpretation makes it clear that
the admissibility condition is just the commutation of left split
spans.

If we take $\beta = 1_B$, we immediately recover the notion of
commutativity of reflexive graphs in the Smith--Pedicchio sense:

\begin{proposition}
Two reflexive graphs commute in the Smith--Pedicchio sense if and
only if they commute in the above sense.\noproof
\end{proposition}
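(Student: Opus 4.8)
The plan is to check that, once we set $\beta = 1_B$, the two equations defining commutation of left split spans unpack into exactly the identities defining a connector in the Smith--Pedicchio sense, so that the statement is a matter of matching two descriptions of the same data. First I would reconcile the data. Taking $\beta = 1_B$ forces $D = B$, and a left split span $(f,\alpha,r)$ from $B$ to $B$ with $\alpha r = \beta = 1_B$ is nothing but a reflexive graph on $B$: the arrows $f,\alpha\colon{A\to B}$ are jointly split by $r$, since $fr = 1_B = \alpha r$. Thus a pair $(f,\alpha,r)$, $(g,\gamma,s)$ with $\beta=1_B$ is precisely a pair of reflexive graphs $A\rightrightarrows B$ and $C\rightrightarrows B$, and conversely every such pair arises in this way. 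Moreover the pullback $A\times_B C$ of $f$ and $g$ is exactly the object of composable configurations on which the Smith--Pedicchio connector is required to be defined: an element $(a,c)$ records that $\alpha(a)$ is related via the first graph to $f(a)=g(c)$, which in turn is related via the second graph to $\gamma(c)$.

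Next I would unpack the two equations $\varphi\comp e_1=\alpha$ and $\varphi\comp e_2=\gamma$. Since $e_1=\links 1_A,sf\rechts$ singles out the configuration in which the second component is degenerate and $e_2=\links rg,1_C\rechts$ singles out the one in which the first component is degenerate, these two equations say exactly that $\varphi$ restricts to $\alpha$ and to $\gamma$ along the two reflexivity inclusions. Under the identification above, these are precisely the two unit identities characterising a connector between the two reflexive graphs. Hence any morphism $\varphi$ witnessing commutation of the left split spans is a Smith--Pedicchio connector, and conversely any Smith--Pedicchio connector satisfies the two displayed equations; the morphism is unique on both sides because $e_1$ and $e_2$ are jointly epimorphic, which holds in the weakly Mal'tsev setting.

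The point to be careful about is the bookkeeping of orientations: one must match the two legs of each reflexive graph with the two projections $\pi_1$, $\pi_2$ of $A\times_B C$ so that the degenerate configurations picked out by $e_1$ and $e_2$ really correspond to the two unit conditions and not to some twisted variant, and one must confirm that no further compatibility condition on the connector is needed beyond these two identities. The latter is automatic here, since joint epimorphy of $e_1$ and $e_2$ means that a connector, if it exists, is completely determined by its restrictions along them, so the remaining associativity-type conditions are forced. This is why the equivalence requires no computation beyond the identification of the two pieces of data, and can be asserted without further argument.
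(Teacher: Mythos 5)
Your proposal is correct and matches the paper's (omitted) argument exactly: the paper asserts the proposition without proof precisely because setting $\beta=1_B$ identifies the data of two left split spans over $B$ with a pair of reflexive graphs on $B$, and the two equations $\varphi\comp e_1=\alpha$, $\varphi\comp e_2=\gamma$ with the unit conditions defining a Smith--Pedicchio connector, whose uniqueness and sufficiency in the weakly Mal'tsev setting come from $e_1$, $e_2$ being jointly epimorphic. Your more detailed unpacking, including the remark that no further compatibility conditions are needed, is exactly the intended justification.
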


We recall that the commutativity of equivalence relations was
first introduced by Smith in~\cite{Smith} for Mal'tsev varieties,
and then extended by Pedicchio~\cite{Pedicchio} to Mal'tsev
categories. However, weakly Mal'tsev categories are a suitable
setting for the definition (because the connector, as defined
above, is unique), and the commutativity can be defined, as above,
just for reflexive graphs.

If, in the diagram \eqref{adm}, we take $B = 0$, we get the
definition of commutativity of two morphisms in the Huq
sense~\cite{Huq}: two morphisms $\alpha \colon {A \to D}$ and
$\gamma \colon {C \to D}$ \defn{commute} when there exists a
(necessarily unique) morphism $\varphi \colon {A \times C \to D}$,
called the
\defn{cooperator} of $\alpha$ and $\gamma$, such that
\[
\varphi \links 1_A, 0\rechts = \alpha \qquad \text{and}\qquad
\varphi \links 0, 1_C \rechts = \gamma.
\]

A pointed regular weakly Mal'tsev category satisfies the
\defn{Smith is Huq} condition \cite{MFVdL1}, shortly denoted by \SH, when a
pair of equivalence relations over the same object commutes as
soon as their zero-classes do. (The converse is always true). We
observe that the \SH\ condition has the following interesting
consequence. We recall that an object $A$ is
\defn{commutative} if its identity commutes with itself (in the
Huq sense); it is \defn{abelian} if it has an internal abelian
group structure.

\begin{proposition}
If \SH\ is satisfied, then every commutative object is abelian.
\end{proposition}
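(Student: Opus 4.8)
Let $X$ be a commutative object. The plan is to feed the pair $(\nabla_X,\nabla_X)$ into \SH, where $\nabla_X=(X\times X,\pi_1,\pi_2)$ is the largest equivalence relation on $X$, and then to read an internal abelian group structure off the resulting Smith commutativity. The first step is to compute the zero-class of $\nabla_X$: since $\links\pi_1,\pi_2\rechts=1_{X\times X}$ is an isomorphism, the defining pullback~\eqref{zero-class} is taken along an isomorphism, so the zero-class of $\nabla_X$ is simply $1_X\colon X\to X$. Hence the two zero-classes of the pair $(\nabla_X,\nabla_X)$ are both equal to $1_X$, and to say that they commute in the Huq sense is, by the definition recalled after~\eqref{adm}, exactly to say that $1_X$ cooperates with itself, i.e.\ that $X$ is commutative.

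So $X$ commutative means precisely that the zero-classes of $\nabla_X$ and $\nabla_X$ commute. Invoking \SH, I would conclude that $\nabla_X$ and $\nabla_X$ themselves commute in the Smith--Pedicchio sense, which by the reformulation recalled after~\eqref{adm} produces a connector $\varphi$. Regarding $\nabla_X$ as the reflexive graph $(\pi_1,\pi_2,\Delta_X)$ of the shape~\eqref{left split span}, the pullback involved is $\nabla_X\times_X\nabla_X\cong X\times X\times X$, so that $\varphi\colon X\times X\times X\to X$ is a ternary operation. Unwinding the connector conditions $\varphi e_1=\pi_2$ and $\varphi e_2=\pi_2$ through the explicit formulas for the comparison maps $e_1$, $e_2$ gives the identities $\varphi(x,y,x)=y$ and $\varphi(x,x,z)=z$, whence $p(x,y,z):=\varphi(y,x,z)$ is an internal Mal'tsev operation on $X$, satisfying $p(x,y,y)=x$ and $p(x,x,z)=z$.

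It then remains to recognise this self-connector as an abelian group structure. I would put $x+y:=p(x,0,y)$ and $-x:=p(0,x,0)$, with unit $0$; the unit laws are immediate from the two Mal'tsev identities. The main obstacle, which is the real content of the argument, is to verify that $+$ is associative and commutative and that $-$ yields inverses, so that $(X,+,0)$ is an internal abelian group and $X$ is therefore abelian by definition. In the present weakly Mal'tsev setting this rests on the uniqueness of connectors: each axiom amounts to an equality of two parallel morphisms into $X$, and such an equality can be established by checking it after precomposition with the jointly epimorphic comparison maps of the relevant pullbacks~\eqref{pullback weakly Mal'tsev}. This is the standard identification of self-connectors of $\nabla_X$ with internal abelian group structures; granting it, the proof is complete.
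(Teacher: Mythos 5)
Your argument follows the paper's proof essentially step for step: the zero-class (equivalently, the normalisation) of $\nabla_X$ is $1_X$, so commutativity of $X$ together with \SH\ yields a self-connector $p\colon X\times X\times X\to X$ of $\nabla_X$, which is an internal Mal'tsev operation on $X$. The final identification of such an operation with an internal abelian group structure---which you sketch and then explicitly ``grant''---is exactly the point the paper disposes of by citing Proposition~2.3.8 of Borceux and Bourn, so the proposal is correct and takes the same route.
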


\begin{proof}
The identity $1_X$ of an object $X$ is the normalisation of the
indiscrete relation~$\nabla_X$. If $X$ is commutative, then $1_X$
commutes with itself; by the \SH\ condition, the relation
$\nabla_X$ commutes with itself, too. This situation is
represented by the following diagram:
\[
\xymatrix@!0@=3em{ & X \times X \times X \ar[dd]_p \ar@<-.5ex>[dl] \ar@<-.5ex>[dr] & \\
X \times X \ar@<-.5ex>[ur] \ar@<-.5ex>[dr] & & X \times X
\ar@<-.5ex>[ul] \ar@<-.5ex>[dl] \\
& X. \ar@<-.5ex>[ul] \ar@<-.5ex>[ur] & }
\]
The connector $p \colon X \times X \times X \to X$ is then an
internal Mal'tsev operation on~$X$. To conclude the proof it
suffices to observe that, in a pointed category, an object is
endowed with an internal Mal'tsev operation if and only if it is
endowed with an internal abelian group
structure~\cite[Proposition~2.3.8]{Borceux-Bourn}.
\end{proof}

Our aim is to study the condition obtained by replacing
equivalence relations and normal subobjects in \SH\ by surjective
left split relations and ideals. In order to do so, we start by
introducing some terminology. We call a morphism
\defn{ideal-proper} when its regular image is an ideal; we say that a
cospan is \defn{ideal-proper} when so are the morphisms of which
it consists.

In a pointed finitely complete category $\C$, given an object $B$,
the category $\Pt_{B}(\C)$ of so-called \defn{points over $B$} is
the category whose objects are pairs \linebreak $(p\colon {E\to
B},s\colon {B\to E})$ where $ps=1_{B}$. A morphism
\[
(p\colon {E\to B},s)\to (p'\colon{E'\to B},s')
\]
in $\Pt_{B}(\C)$ is a morphism $f\colon E\to E'$ in $\C$ such that
$p'f=p$ and $fs=s'$. We have, for any $B$, a functor (called the
\defn{kernel functor}) $\Ker_B \colon \Pt_B(\C) \to \C$
associating with every split epimorphism its kernel. We can now
formulate the main result of this section.

\begin{theorem}\label{Theorem (SH)}
In any pointed, regular and weakly Mal'tsev category $\C$, the
following are equivalent:
\begin{tfae}
\item for every object $B$ in $\C$, the kernel functor $\Ker_B
\colon{\Pt_{B}(\C)\to \C}$ reflects Huq-commutativity of
ideal-proper cospans; \item a pair of surjective left split
relations over the same objects commutes as soon as their
zero-classes do; \item a pair of surjective left split spans over
the same objects commutes as soon as their zero-classes do.
\end{tfae}
\end{theorem}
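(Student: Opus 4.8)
The plan is to prove the cycle of implications (i) $\Rightarrow$ (ii) $\Rightarrow$ (iii) $\Rightarrow$ (i), which is the standard strategy for a \texttt{tfae} statement and lets us use the two stronger conditions as stepping stones. The conceptual heart of the matter is the dictionary established in the previous sections: zero-classes (equivalently, normalisations) of surjective left split relations are precisely ideals (Theorem~\ref{theorem}), and commutation of left split spans in the sense of~\eqref{adm} is exactly Bourn's admissibility, which reduces to Huq-commutativity of cooperators when the base is trivial. The whole proof should therefore be an exercise in translating ``commutation of relations/spans over an object'' into ``Huq-commutativity of certain morphisms into the same object'', and matching this translation with the kernel-functor formulation in~(i).

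\smallskip

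First I would unpack condition~(i). Given two points over $B$ and two morphisms out of their total objects into a common codomain, forming a cospan, the statement ``$\Ker_B$ reflects Huq-commutativity'' means: if the kernels of the two points, composed with the given morphisms, commute in the Huq sense in $\C$, then the original morphisms already commute as points over $B$ (that is, in $\Pt_B(\C)$). The restriction to \emph{ideal-proper} cospans is the crucial hypothesis that makes the implication to~(ii) work, because an ideal-proper morphism is exactly one whose regular image is an ideal, and Theorem~\ref{theorem} identifies ideals with zero-classes of surjective left split relations. So I would next set up the precise correspondence: a surjective left split relation $(d,c,e)$ from $X$ to $Y$ should be presented as a point over a suitable base together with a morphism into that base's structure, so that its zero-class $i\colon I\to Y$ arises as $\Ker$ of the point post-composed with that morphism. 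Concretely, commutation of two such relations in the sense of~\eqref{adm} is a statement about the existence of a connector $\varphi$ over the base, while commutation of their zero-classes is the Huq-commutativity of the corresponding kernels; condition~(i) is precisely the bridge that upgrades the latter to the former.

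\smallskip

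For (ii) $\Rightarrow$ (iii) the natural move is to reduce surjective left split spans to surjective left split relations by factoring through the relation they generate: replace a span $(d,c)$ by the image of $\langle d,c\rangle\colon R\to X\times Y$, which is jointly monomorphic, hence a relation, and check that the left split and surjectivity properties are inherited and that zero-classes and commutation are unchanged under this jointly-monic reflection. This is where I expect the routine but genuinely necessary verifications to live: one must confirm that commutation of the spans and commutation of the induced relations are equivalent, using uniqueness of the connector guaranteed by weak Mal'tsevness. For (iii) $\Rightarrow$ (i), I would run the dictionary in reverse — given an ideal-proper cospan whose kernels commute, build surjective left split spans whose zero-classes are the regular images of the cospan legs (these are ideals by hypothesis, so Theorem~\ref{theorem} applies), invoke~(iii) to obtain commutation of the spans, and then read off the connector over $B$ as the required witness that the cospan commutes in $\Pt_B(\C)$.

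\smallskip

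The main obstacle, I anticipate, is the bookkeeping in the (i) $\Leftrightarrow$ (ii) translation: making the base object $B$, the points over it, and the two morphisms into a common codomain line up so that ``$\Ker_B$ reflects Huq-commutativity'' is \emph{literally} the same assertion as ``surjective left split relations commute when their zero-classes do''. The subtlety is that a connector for relations lives over a non-trivial base $B$, whereas a Huq cooperator lives over $0$, so one must carefully exhibit the connector as kernel-data relative to $B$ and argue that the uniqueness supplied by the weakly Mal'tsev hypothesis makes the two notions of ``commute'' coincide rather than merely correspond. Once this identification is pinned down, the remaining implications are largely formal consequences of Theorem~\ref{theorem} together with the stability of ideals under regular images and pullbacks proved in Proposition~\ref{stability}.
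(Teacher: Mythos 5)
Your proposal follows essentially the same route as the paper: the equivalence of (ii) and (iii) is obtained by taking regular images, and the core of the argument is the translation of a pair of left split spans as in~\eqref{adm} into an ideal-proper cospan in $\Pt_B(\C)$ (ideal-properness coming from the factorisation of $\links\alpha,f\rechts$ as a kernel followed by a regular epimorphism), under which $\Ker_B$ produces exactly the zero-classes and Huq-commutativity in $\Pt_B(\C)$ --- whose binary product is the pullback over $B$ --- is literally the existence of the connector. One small slip worth correcting: in your sketch of (iii) $\Rightarrow$ (i), the zero-classes of the spans attached to a cospan leg $u\colon(E,p,s)\to(F,\pi,\sigma)$ are the normalisations $u\comp\ker(p)$, i.e.\ the values of the kernel functor, and not the regular images of the legs (those only serve to make the cospan ideal-proper); with that adjustment your dictionary is the one used in the paper's proof.
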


\begin{proof}
The equivalence between conditions (ii) and (iii) is proved just
by taking regular images. In order to prove that (i) and (ii) are
equivalent, given a pair of surjective left split relations over
the same object, we rewrite Diagram~\eqref{adm} in the shape
\begin{equation} \label{cospan in points}
\vcenter{\xymatrix@!0@=5em{A \ar@<-.5ex>[rd]_(.7){f}
\ar[r]^-{\links\alpha,f\rechts} & D\times B
\ar@<-.5ex>[d]_(.3){\pi_{B}}
 & C \ar@<.5ex>[dl]^(.7){g} \ar[l]_-{\links\gamma,g\rechts}\\
& B \ar@<-.5ex>[u]_(.7){\links\beta,1_{B}\rechts}
\ar@<-.5ex>[ul]_(.3){r} \ar@<.5ex>[ur]^(.3){s}}}
\end{equation}
and consider it as a cospan $(\links\alpha,f\rechts,
\links\gamma,g\rechts)$ in $\Pt_{B}(\C)$. Let us prove that this
cospan is ideal-proper. To do that, it suffices to notice that
$\links \alpha,f\rechts$ is the composite of the kernel $\links
1_{A},f\rechts\colon A\to A\times B$ with the regular epimorphism
$\alpha\times 1_{B}\colon {A\times B\to D\times B}$. Indeed, the
outer square in the diagram
\[
\xymatrix@!0@=4em{A \ar[rr]^-{\links 1_{A},f\rechts} \ar[dd]_-{f} \ar@<-.5ex>[rd]_(.6){f} && A\times B \ar@<-.5ex>[ld]_(.6){\pi_{B}} \ar[dd]^{f\times 1_{B}} \\
& B \ar@<-.5ex>[rd]_-{\links 1_{B},1_{B}\rechts} \ar@<-.5ex>[ul]_(.4){r} \ar@<-.5ex>[ur]_(.3){\links r,1_{B}\rechts} \ar@{=}@<-.5ex>[dl] \\
B \ar[rr]_-{\links 1_{B},1_{B}\rechts} \ar@<-.5ex>@{=}[ru] &&
B\times B \ar@<-.5ex>[lu]_(.6){\pi_{2}}}
\]
is a pullback in $\Pt_{B}(\C)$. The same is true for $\links
\gamma,g\rechts$. To conclude the proof of the equivalence between
(i) and (ii) it suffices then to observe that applying the kernel
functor $\Ker_B$ to the cospan \eqref{cospan in points} gives the
normalisations of the two surjective split relations.
\end{proof}

It is immediately seen that condition (i) above is equivalent to
the condition that, for every morphism $p\colon {E\to B}$ in $\C$,
the pullback functor
\[
p^{*}\colon{\Pt_{B}(\C)\to \Pt_{E}(\C)}
\]
---which sends every split epimorphism over $B$ into its pullback
along $p$---reflects Huq-commutativity of ideal-proper cospans. In
the same way as for the previous theorem, it can be shown that
also the following conditions are equivalent.

\begin{proposition} \label{Proposition (W)}
In any pointed, regular and weakly Mal'tsev category $\C$, the
following are equivalent:
\begin{tfae}
\setcounter{enumi}{3} \item for every object $B$ in $\C$, the
kernel functor $\Ker_B \colon{\Pt_{B}(\C)\to \C}$ reflects
Huq-commutativity of cospans; \item a pair of left split relations
over the same objects commutes as soon as their zero-classes do;
\item a pair of left split spans over the same objects commutes as
soon as their zero-classes do.\noproof
\end{tfae}
\end{proposition}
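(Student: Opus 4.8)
The plan is to run the proof of Theorem~\ref{Theorem (SH)} once more, now dropping the word ``surjective'' from the relations and, in step, the word ``ideal-proper'' from the cospans. As there, the equivalence of~(v) and~(vi) comes from passing to regular images: a left split span yields a left split relation via the image factorisation of $\links d,c\rechts$, and this factorisation changes neither the commutativity of the spans nor their zero-classes.

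The core is the equivalence of~(iv) and~(v). Given a pair of left split relations over the same object $B$, I would rewrite Diagram~\eqref{adm} in the shape~\eqref{cospan in points} and read it as a cospan $(\links\alpha,f\rechts,\links\gamma,g\rechts)$ in $\Pt_{B}(\C)$. The only difference with Theorem~\ref{Theorem (SH)} is that $\alpha$ and $\gamma$ are now arbitrary morphisms rather than regular epimorphisms, so the computation exhibiting this cospan as ideal-proper breaks down---and in general the cospan is not ideal-proper. This is precisely why condition~(iv) must quantify over all cospans and not merely the ideal-proper ones. Exactly as before, applying $\Ker_{B}$ to this cospan returns the normalisations, hence by Proposition~\ref{zero-class = normalisation} the zero-classes, of the two relations; so reflection of Huq-commutativity by $\Ker_{B}$, applied to this cospan, says exactly that the relations commute as soon as their zero-classes do. This gives (iv) \implies\ (v).

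For (v) \implies\ (iv) I would realise every cospan in this way. From an arbitrary cospan $(u,v)$ in $\Pt_{B}(\C)$, with domains the points $(A,p_{A},s_{A})$ and $(C,p_{C},s_{C})$ and codomain a point $(p\colon{E\to B},e)$, I attach the left split spans $(p_{A},u,s_{A})$ and $(p_{C},v,s_{C})$ from $B$ to the object $E$. Since $u$ and $v$ preserve sections, $us_{A}=e=vs_{C}$, so the morphism $\beta=e$ demanded in~\eqref{adm} is available; the connector of these spans is then nothing but the cooperator of $(u,v)$ in the fibre, and $\Ker_{B}(u)$, $\Ker_{B}(v)$ differ from the zero-classes of the spans only by composition with the monomorphism $\ker(p)$. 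Assuming~(v), equivalently~(vi), commutation of the zero-classes forces the spans---and hence the cospan---to commute.

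The step I expect to be the main obstacle, and the one the brief ``in the same way as for the previous theorem'' leaves implicit, is the precise identification of Smith-type commutativity (a connector in $\C$) with Huq-commutativity (a cooperator in the fibre $\Pt_{B}(\C)$), together with the check that $\Ker_{B}$ carries the rewritten cospan to the zero-classes. Both depend on the weakly Mal'tsev hypothesis: by Definition~\ref{weakly Mal'tsev category} the morphisms $e_{1}$ and $e_{2}$ are jointly epimorphic, which makes connectors and cooperators unique and forces the connector to live over $B$. They also use that a monomorphism reflects Huq-commutativity, which is what lets me move between the codomain point $(E,p,e)$ and the product point $E\times B$ of the rewriting~\eqref{cospan in points}, so that the two presentations of one cospan agree on commutativity and on kernels.
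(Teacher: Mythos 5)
Your proposal is correct and is exactly the argument the paper intends (the paper itself gives no proof beyond ``in the same way as for the previous theorem''): you rerun the proof of Theorem~\ref{Theorem (SH)} with ``surjective'' and ``ideal-proper'' deleted, identifying connectors of left split spans with cooperators in $\Pt_B(\C)$ via the joint epimorphicity of $e_1,e_2$, and kernels with zero-classes via $\ker(p)$. The one blemish is the closing appeal to ``a monomorphism reflects Huq-commutativity'', which is false in general and also unnecessary here: your direct realisation of an arbitrary cospan $(u,v)$ as the left split spans $(p_A,u,s_A)$ and $(p_C,v,s_C)$ already handles arbitrary codomain points without passing through $E\times B$, and the step from commuting kernels to commuting zero-classes only needs that composition with the monomorphism $\ker(p)$ \emph{preserves} commutativity.
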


Again, condition (iv) can be expressed equivalently in terms of
all pullback functors $p^{*}\colon{\Pt_{B}(\C)\to \Pt_{E}(\C)}$.
Note that the conditions (iv)--(vi) are stronger then (i)--(iii):
this is easily seen by comparing conditions (i) and (iv). Indeed,
condition (iv) requires that the kernel functors reflect
commutativity of a wider class of cospans.

We conclude by observing that Theorem~\ref{Theorem (SH)} is a
generalisation of~\cite[Proposition~2.5]{MFVdL3} and
Proposition~\ref{Proposition (W)} is a generalisation
of~\cite[Proposition~3.1]{MFVdL3}, which have been proved for
pointed exact Mal'tsev categories (see also Theorem~2.1 in
\cite{BMFVdL}). Indeed, in such categories ideals coincide with
kernels (Proposition~\ref{pointed exact Mal'tsev}). In particular,
in a pointed exact Mal'tsev category the conditions (i)--(iii) are
equivalent to \SH, while (iv)--(vi) are equivalent to the stronger
condition \SSH---see~\cite{MFVdL3}.

\section{Acknowledgement}
We would like to thank the referee for valuable comments helping
us to improve the manuscript.


\providecommand{\noopsort}[1]{}
\providecommand{\bysame}{\leavevmode\hbox
to3em{\hrulefill}\thinspace}
\providecommand{\MR}{\relax\ifhmode\unskip\space\fi MR }
\providecommand{\MRhref}[2]{%
 \href{http://www.ams.org/mathscinet-getitem?mr=#1}{#2}
} \providecommand{\href}[2]{#2}

\end{document}